\newtheorem{thm}{Theorem}[section]
\theoremstyle{definition}
\numberwithin{equation}{section}
\def\imod#1{\allowbreak\mkern5mu({\operator@font mod}\,\,#1)}
\begin{document}

\title[On two 10th order mock theta identities]
{On two 10th order mock theta identities} 
 
\author{Jeremy Lovejoy and Robert Osburn}

\address{CNRS, LIAFA, Universit{\'e} Denis Diderot - Paris 7, Case 7014, 75205 Paris Cedex 13, FRANCE}

\address{School of Mathematical Sciences, University College Dublin, Belfield, Dublin 4, Ireland}

\email{lovejoy@liafa.jussieu.fr}

\email{robert.osburn@ucd.ie}

\subjclass[2010]{Primary: 33D15; Secondary: 05A30, 11F03, 11F37}
\keywords{Mock theta functions, Appell-Lerch sums, Identities}

\date{\today}

\begin{abstract}
We give short proofs of conjectural identities due to Gordon and McIntosh involving two 10th order mock theta functions.
\end{abstract}
 
\maketitle

\section{Introduction}

Two of the ``10th order" mock theta functions found on page 9 of Ramanujan's lost notebook \cite{Ra1} are

$$
X(q):= \sum_{n \geq 0} \frac{(-1)^n q^{n^2}}{(-q; q)_{2n}}
$$

\noindent and

$$
\chi(q):= \sum_{n \geq 0} \frac{(-1)^n q^{(n+1)^2}}{(-q; q)_{2n+1}}.
$$

\noindent Here and throughout, we use the standard $q$-hypergeometric notation

\begin{equation*}
(a)_n = (a;q)_n = \prod_{k=1}^{n} (1-aq^{k-1}),
\end{equation*}

\noindent valid for $n \in \mathbb{N} \cup \{\infty\}$. In the spirit of the celebrated ``5th order" mock theta conjectures \cite{AG,Hi1}, Gordon and McIntosh have recently conjectured the following identities for $X(q)$ and $\chi(q)$ (see (5.18) in \cite{GM1} or \cite{GM2}):

\begin{equation} \label{mtc3}
\begin{aligned}
X(-q^2) & = - 2q g_2(q, q^{20}) + 2q^5 g_2(q^9, q^{20}) \\
& + \frac{(q^4; q^4)_{\infty}^2 \bigl( j(-q^2, q^{20})^2 j(q^{12}, q^{40}) + 2q (q^{40}; q^{40})_{\infty}^3 \bigr)}{(q^2; q^2)_{\infty} (q^{20}; q^{20})_{\infty} (q^{40}; q^{40})_{\infty} j(q^8, q^{40})} \\
\end{aligned}
\end{equation}

\noindent and

\begin{equation} \label{mtc4}
\begin{aligned}
\chi(-q^2) & = - 2q^3 g_2(q^3, q^{20}) - 2q^5 g_2(q^7, q^{20}) \\
& + \frac{q^2 (q^4; q^4)_{\infty}^2 \bigl( 2q (q^{40}; q^{40})_{\infty}^3 - j(-q^6, q^{20})^2 j(q^4, q^{40}) \bigr)}{(q^2; q^2)_{\infty} (q^{20}; q^{20})_{\infty} (q^{40}; q^{40})_{\infty} j(q^{16}, q^{40})}, \\
\end{aligned}
\end{equation}

\noindent where $g_2(x,q)$ is a so-called ``universal mock theta function"

$$
g_2(x, q):= \frac{1}{j(q, q^2)} \sum_{n \in \mathbb{Z}} \frac{(-1)^n q^{n(n+1)}}{1-xq^n}
$$
\noindent and $j(x,q):=(x)_{\infty} (q/x)_{\infty} (q)_{\infty}.$

As stated in \cite{GM2}, identities (\ref{mtc3}) and (\ref{mtc4}) were discovered by using computer algebra and ``a rigorous proof has yet to be worked out".  It is now well-known that such mock theta conjectures can be reduced to a finite (but possibly formidable) computation by using the theory of harmonic weak Maass forms (see \cite{folsom} for an example of such an argument). Instead of following this approach, we observe that these two identities follow easily upon appealing to results of Choi \cite{choi2} which express $X(q)$ and $\chi(q)$ in terms of Appell-Lerch sums and modular forms, applying properties of Appell-Lerch sums and verifying a simple modular form identity.  Since all classical mock theta functions can be written in terms of Appell-Lerch sums (see Section 4 of \cite{Hi-Mo1}), one can also easily prove identities similar to (\ref{mtc3}) and (\ref{mtc4}) for 2nd, 3rd, 6th and 8th order mock theta functions, see (5.2), (3.12), (5.10) and the top of page 125 in \cite{GM1}. The details are left to the interested reader. Our main result is the following.

\begin{thm} \label{main} Identities (\ref{mtc3}) and (\ref{mtc4}) are true.
\end{thm}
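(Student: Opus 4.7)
The plan is to follow the route outlined by the authors in their introduction. By a theorem of Choi in \cite{choi2}, both $X(q)$ and $\chi(q)$ can be written as a sum of Appell-Lerch pieces plus a ratio of theta functions. I would begin by substituting $q \mapsto -q^2$ in these expressions, so that $X(-q^2)$ and $\chi(-q^2)$ are each expressed as an Appell-Lerch part (likely involving $g_2$ at arguments of the form $q^a$ in base $q^{10}$ or $q^{20}$) plus a theta-quotient part. The target identities (\ref{mtc3}) and (\ref{mtc4}) have the same shape, so the proof splits naturally into matching the Appell-Lerch parts and matching the theta parts separately.

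For the Appell-Lerch parts I would use the standard functional identities for $g_2(x,q)$ collected in Section~4 of \cite{Hi-Mo1}: the elliptic transformation $g_2(qx,q) = x^{-1} g_2(x,q) + x^{-1} j(x,q)/j(q,q^2)$ (and its iterates), the symmetry $g_2(x^{-1},q) = -x\, g_2(x,q)$, and in particular the quintuple splitting that expresses $g_2(x, q^{2})$ as a linear combination of $g_2(xq^{2i}, q^{10})$ for $0 \le i \le 4$ modulo theta quotients. Applying such a splitting to the Appell-Lerch piece coming from Choi will, after collecting terms, produce the combinations $g_2(q, q^{20})$ and $g_2(q^9, q^{20})$ for (\ref{mtc3}) (and $g_2(q^3, q^{20})$, $g_2(q^7, q^{20})$ for (\ref{mtc4})), while dumping all other $g_2$ arguments into theta quotients via the functional equations above.

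Once the $g_2$ contributions are matched, the remaining equality is an identity purely among infinite products and functions $j(\cdot, \cdot)$ at various arguments in base $q$, $q^2$, $q^4$, $q^{20}$, $q^{40}$. This is a classical theta function identity of the kind that can be verified by reducing everything to a common modulus (here $q^{40}$) using the standard theta relations and either applying Jacobi's triple product and addition formulas for $j(x,q)$ or, as a last resort, appealing to the valence formula to compare sufficiently many $q$-series coefficients of two modular forms of the same weight on an appropriate congruence subgroup.

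The main obstacle is bookkeeping: the initial Choi decomposition and the quintuple Appell-Lerch split each produce a sizeable cloud of theta quotients, and the art is arranging these cancellations so that the leftover modular identity is genuinely tractable. I expect that choosing the correct shift parameters in the $g_2$ functional equations at the outset (rather than symmetrically) is what makes the residual theta identity collapse to the clean single-ratio form visible on the right-hand sides of (\ref{mtc3}) and (\ref{mtc4}).
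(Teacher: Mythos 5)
Your overall architecture --- start from Choi's decomposition of $X$ and $\chi$, manipulate the Appell-Lerch parts until they match the $g_2$ terms of the conjecture, and reduce the leftover to a theta-function identity verified by a finite computation --- is the same as the paper's, and your proposed endgame (valence formula / coefficient check) is essentially what the authors do with Garvan's MAPLE program. The gap is in the central step, where you name the wrong dissection. After $q \mapsto -q^2$, Choi's formulas place the mock part at base $-q^{10}$ (concretely as $k(-q^2,-q^{10})$ and $k(q^4,-q^{10})$, which must first be converted to $m$-functions via (\ref{m5})), whereas the conjectured right-hand sides involve $g_2(\cdot, q^{20})$, which by (\ref{m4}) are $m$-functions at base $q^{40}$. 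Bridging these requires raising the modulus by a factor of $4$ with a sign twist, since $(-q^{10})^4 = q^{40}$; this is exactly what the Hickerson--Mortenson identity (\ref{m3}) does, producing from one $m(\cdot,-q^{10},\cdot)$ precisely the two $m(\cdot,q^{40},\cdot)$ terms that, after adjusting $z$ via (\ref{m2}) and (\ref{m1}), become $g_2(q,q^{20})$ and $g_2(q^9,q^{20})$ (resp.\ $g_2(q^3,q^{20})$ and $g_2(q^7,q^{20})$).

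Your proposed key tool --- a quintuple splitting expressing $g_2(x,q^{2})$ as a combination of $g_2(xq^{2i},q^{10})$, $0 \le i \le 4$ --- changes the modulus by a factor of $5$, and no factor of $5$ relates base $-q^{10}$ to base $q^{20}$ (or $q^{40}$); with that tool the Appell-Lerch parts cannot be made to match, so as written the plan fails at its crux. Replacing the $5$-dissection by the $2$-dissection (\ref{m3}) (used together with (\ref{m1}), (\ref{m2}), (\ref{m4}) and (\ref{m5})) turns your outline into the paper's proof. One further caveat: the $\Lambda$ and $\Delta$ correction terms generated by (\ref{m3}) and (\ref{m2}) must be carried into the final modular identity, so the ``mock part'' and ``theta part'' do not separate as cleanly as your second paragraph suggests; the residual theta identity one actually verifies includes these $\Delta$ quotients alongside Choi's original theta term.
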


\section{Preliminaries}

We recall some required facts which are conveniently given in \cite{Hi-Mo1}. For $x$, $z \in \mathbb{C}^{*}:=\mathbb{C} \setminus \{ 0 \}$ with neither $z$ nor $xz$ an integral power of $q$, define the Appell-Lerch sums

\begin{equation*} 
m(x,q,z) := \frac{1}{j(z,q)} \sum_{r \in \mathbb{Z}} \frac{(-1)^r q^{\binom{r}{2}} z^r}{1-q^{r-1} xz}.
\end{equation*}

\noindent Also, if $x^2$ is neither zero nor an integral power of $q^2$, then define

\begin{equation*}
k(x,q):= \frac{1}{xj(-q,q^4)} \sum_{n \in \mathbb{Z}} \frac{q^{n(2n+1)}}{1-q^{2n} x^2}.
\end{equation*}

Following \cite{Hi-Mo1}, we use the term ``generic" to mean that the parameters do not cause poles in the the Appell-Lerch sums or in the quotients of theta functions. For generic $x$, $z$, $z_0$ and $z_1 \in \mathbb{C}^{*}$, the sums $m(x,q,z)$ satisfy (see (2.2b) of Proposition 2.1, Theorem 2.3, Corollary 2.7, Proposition 3.4 and Proposition 3.6 in \cite{Hi-Mo1})

\begin{equation} \label{m1}
m(x,q,z)=x^{-1} m(x^{-1}, q, z^{-1}),
\end{equation}

\begin{equation} \label{m2}
m(x, q, z_1) = m(x, q, z_0) + \Delta(x, q, z_1, z_0),
\end{equation}

\begin{equation} \label{m3}
m(x,q,z) = m(-qx^2, q^4, z^4) - q^{-1} x m(-q^{-1}x^2, q^4, z^4) - \Lambda(x,q,z),
\end{equation}

\begin{equation} \label{m4}
g_2(x,q) = -x^{-1} m(x^{-2}q, q^2, x)
\end{equation} 

\noindent and

\begin{equation} \label{m5}
xk(x,q) = m(-x^2, q, x^{-2}) + \frac{J_1^4}{2J_2^2 j(x^2, q)}.
\end{equation}

\noindent Here

$$
 \Delta(x, q, z_1, z_0) := \frac{z_0 J_1^3 j(z_{1} / z_{0}, q) j(xz_{0} z_{1}, q)}{j(z_{0}, q) j(z_{1}, q) j(xz_{0}, q) j(xz_{1}, q)},
$$

$$
\Lambda(x,q,z):=\frac{J_2 J_4 j(-xz^2, q) j(-xz^3, q)}{x j(xz, q) j(z^4, q^4) j(-qx^2 z^4, q^2)}
$$

\noindent and $J_{m}:= J_{m, 3m}$ with $J_{a,m}=J_{a,m}(q):= j(q^{a}, q^{m}).$

\section{Proof of Theorem \ref{main}} 
\begin{proof}[Proof of Theorem \ref{main}]

As observed in \cite{Hi-Mo1}, after replacing $q$ with $-q^2$ in $X(q)$ on page 183 of \cite{choi2} and using (\ref{m5}) followed by (\ref{m2}), we have 

\begin{equation} \label{s1}
\begin{aligned}
X(-q^2) & = -2q^2 k(-q^2, -q^{10}) - \frac{J_5(-q^2) J_{10}(-q^2) J_{2,5}(-q^2)}{J_{2,10}(-q^2) J_{1,5}(-q^2)}\\
&=2m(-q^4, -q^{10}, q^8) - \frac{J_{3,10}(-q^2) J_{5,10}(-q^2)}{J_{1,5}(-q^2)}.
\end{aligned}
\end{equation}

\noindent Now, taking $x=-q^4$, $q \to -q^{10}$ and $z=q^8$ in (\ref{m3}) and simplifying via (\ref{m1}) yields

\begin{equation} \label{s2}
m(-q^4, -q^{10}, q^8) = m(q^{18}, q^{40}, q^{32}) -q^{-4} m(q^2, q^{40}, q^{-32}).
\end{equation}

\noindent Here, $\Lambda(-q^4, -q^{10}, q^8)=0$. By (\ref{m2}), we have

\begin{equation} \label{s3}
m(q^{18}, q^{40}, q^{32}) = m(q^{18}, q^{40}, q) + \Delta(q^{18}, q^{40}, q^{32}, q)
\end{equation}

\noindent and

\begin{equation} \label{s4}
m(q^2, q^{40}, q^{-32}) = m(q^2, q^{40}, q^9) + \Delta(q^2, q^{40}, q^{-32}, q^9).
\end{equation}

\noindent Comparing (\ref{mtc3}) with (\ref{m4}) and (\ref{s1})--(\ref{s4}), it now suffices to prove the identity

\begin{equation*}
\begin{aligned}
& \frac{(q^4; q^4)_{\infty}^2 \bigl( j(-q^2, q^{20})^2 j(q^{12}, q^{40}) + 2q (q^{40}; q^{40})_{\infty}^3 \bigr)}{(q^2; q^2)_{\infty} (q^{20}; q^{20})_{\infty} (q^{40}; q^{40})_{\infty} j(q^8, q^{40})}  \\
& = - \frac{J_{3,10}(-q^2) J_{5,10}(-q^2)}{J_{1,5}(-q^2)} + 2 \Delta(q^{18}, q^{40}, q^{32}, q) - 2q^{-4} \Delta(q^2, q^{40}, q^{-32}, q^9).
\end{aligned}
\end{equation*}

\noindent This identity has been verified using Garvan's MAPLE program, see 
\begin{center}
\url{http://www.math.ufl.edu/~fgarvan/qmaple/theta-supplement/} 
\end{center}
\noindent This proves (\ref{mtc3}). 

After replacing $q$ with $-q^2$ in $\chi(q)$ on page 183 of \cite{choi2} and again using (\ref{m5}) followed by (\ref{m2}), we have 

\begin{equation} \label{s6}
\begin{aligned}
\chi(-q^2) & = 2 - 2q^4 k(q^4, -q^{10}) - q^2 \frac{J_5(-q^2) J_{10}(-q^2) J_{1,5}(-q^2)}{J_{4,10}(-q^2) J_{2,5}(-q^2)} \\
& = 2m(q^2, -q^{10}, q^4) - \frac{q^2 J_{1,10}(-q^2) J_{5,10}(-q^2)}{J_{2,5}(-q^2)}.
\end{aligned}
\end{equation}

\noindent Now, taking $x=q^2$, $q \to -q^{10}$ and $z=q^4$ in (\ref{m3}) and simplifying via (\ref{m1}) yields

\begin{equation} \label{s7}
m(q^2, -q^{10}, q^4) = m(q^{14}, q^{40}, q^{16}) + q^{-2} m(q^6, q^{40}, q^{-16}).
\end{equation}

\noindent Here, $\Lambda(q^2, -q^{10}, q^4)=0$. By (\ref{m2}), we have

\begin{equation} \label{s8}
m(q^{14}, q^{40}, q^{16}) = m(q^{14}, q^{40}, q^3) + \Delta(q^{14}, q^{40}, q^{16}, q^3)
\end{equation}

\noindent and

\begin{equation} \label{s9}
m(q^6, q^{40}, q^{-16}) = m(q^6, q^{40}, q^7) + \Delta(q^6, q^{40}, q^{-16}, q^7).
\end{equation}

\noindent Comparing (\ref{mtc4}) with (\ref{m4}) and (\ref{s6})--(\ref{s9}), it now suffices to prove the identity

\begin{equation*}
\begin{aligned}
& \frac{(q^4; q^4)_{\infty}^2 \bigl( 2q (q^{40}; q^{40})_{\infty}^3 - j(-q^6, q^{20})^2 j(q^4, q^{40}) \bigr)}{(q^2; q^2)_{\infty} (q^{20}; q^{20})_{\infty} (q^{40}; q^{40})_{\infty} j(q^{16}, q^{40})} \\
& = - q^2\frac{J_{1,10}(-q^2) J_{5,10}(-q^2)}{J_{2,5}(-q^2)} + 2\Delta(q^{14}, q^{40}, q^{16}, q^3) + 2q^{-2} \Delta(q^6, q^{40}, q^{-16}, q^7).
\end{aligned}
\end{equation*}

\noindent This identity has also been verified using Garvan's MAPLE program. This proves (\ref{mtc4}).

\end{proof}

\section*{Acknowledgements} The second author would like to thank the Institut des Hautes {\'E}tudes
Scientifiques for their support during the preparation of this paper.

\end{document}